\documentclass{amsart}
\usepackage{amssymb}
\usepackage[utf8]{inputenc}
\usepackage[T1]{fontenc}
\usepackage[english]{babel}
\usepackage{csquotes}
\usepackage{comment}
\usepackage{xargs}
\usepackage{graphicx, color}
\setkeys{Gin}{draft=false}
\usepackage[figurewithin=none]{caption,subcaption} 
\graphicspath{{./}}
\usepackage{enumitem}
\setlist[enumerate,1]{label={(\roman*)}}

\usepackage[bb=dsserif]{mathalpha}

%% Algo
\usepackage[noEnd=true,beginComment=//~,beginLComment=/*~, endLComment=~*/]{algpseudocodex}
\usepackage[section]{algorithm}
\algnewcommand{\Break}{\textbf{break}}
\algnewcommand{\Record}{\textbf{record}}
\algnewcommand{\AND}{\textbf{and} }

%% Linenumbers
\usepackage[pagewise,modulo,mathlines]{lineno}

 \usepackage{hyperref}
 \hypersetup{
 	hidelinks,
   colorlinks  = true,    % Colours links instead of ugly boxes
   urlcolor    = blue,    % Colour for external hyperlinks
   linkcolor   = blue,    % Colour of internal links
   citecolor   = blue,      % Colour of citations
   pdfauthor   = { },%
   pdfsubject  = {Census of Cayley graphs},%
   pdftitle    = {},  %
   breaklinks=true,
}

\usepackage[capitalise,noabbrev, nameinlink,]{cleveref}
\usepackage{tikz-cd}
\tikzcdset{%
row sep=tiny,%
arrows={shorten=-4pt}
% start anchor = center, %
% end anchor = center%
}

\usepackage[loadshadowlibrary,shadow, colorinlistoftodos,textsize=tiny,obeyFinal]{todonotes}

\setuptodonotes{fancyline, backgroundcolor=gray!10,bordercolor=gray}

%%Other examples of todonotes %requires xargs
\newcommandx{\inline}[2][1=]{\todo[inline, #1]{#2}}
\newcounter{hw}
\setcounter{hw}{1}
\newcommandx{\homework}[2][1=]{\todo[inline, caption={Homework \thehw} #1]{\stepcounter{hw} #2}}

\newcommandx{\rhysLong}[2][1=Long todo, usedefault]{\todo[inline, color=green!25, caption={\textbf{Rhys:} #1}]{\textbf{Rhys: #1}. #2}}
\newcommandx{\primozLong}[2][1=Long todo, usedefault]{\todo[inline,color=yellow!25, caption={\textbf{Primož:} #1}]{\textbf{Primož: #1}. #2} }
\newcommandx{\noteLong}[2][1=Long todo, usedefault]{\todo[inline,color=red!25, caption={\textbf{Note:} #1}]{\textbf{Note: #1}. #2} }

\makeatletter
    \providecommand\@dotsep{5}
\makeatother

\theoremstyle{plain}
\newtheorem{thm}{Theorem}[section]

\newtheorem{lem}[thm]{Lemma}

\newtheorem*{conjecture*}{Conjecture}
\newtheorem{problem}{Problem}

\theoremstyle{definition}

\newtheorem{definition}[thm]{Definition}

\theoremstyle{remark}

%%%

\numberwithin{equation}{section}

%cleveref
\Crefname{thm}{Theorem}{Theorems}
\Crefname{theorem}{Theorem}{Theorems}
\Crefname{lem}{Lemma}{Lemmas}
\Crefname{lemma}{Lemma}{Lemmas}
\Crefname{prop}{Proposition}{Propositions}
\Crefname{proposition}{Proposition}{Propositions}
\Crefname{coro}{Corollary}{Corollaries}
\Crefname{corollary}{Corollary}{Corollaries}
\Crefname{defn}{Definition}{Definitions}
\Crefname{definition}{Definition}{Definitions}
\Crefname{exam}{Example}{Examples}
\Crefname{eg}{Example}{Examples}
\Crefname{example}{Example}{Examples}
\Crefname{note}{Note}{Notes}
\Crefname{rem}{Remark}{Remarks}
\Crefname{remark}{Remark}{Remarks}

%% Improved versions of \newcommand-like comands 
\usepackage{xargs}

%Better lookin latex
\renewcommand{\leq}{\leqslant} %
\renewcommand{\geq}{\geqslant}
\renewcommand{\epsilon}{\varepsilon} %
\renewcommand{\subset}{\subseteq} %

\renewcommand{\{}{\lbrace}
\renewcommand{\}}{\rbrace}
 
% \renewcommand{\qedsymbol}{$\blacksquare$} 

%\renewcommand{\hat}{\widehat}

% \newcommand{\iff}{\Leftrightarrow}

%letters

%\newcommand{\s}{\sigma}

\newcommand{\cA}{\mathcal{A}}

\newcommand{\cG}{\mathcal{G}}

\newcommand{\cN}{\mathcal{N}}

\newcommand{\cQ}{\mathcal{Q}}
\newcommand{\cS}{\mathcal{S}}

%toroids
%\newcommand{\LL}{\Lambda}
%\newcommand{\UoverLL}{\cU/\LL}
%\newcommand{\bLL}{\mathbf{\Lambda}}

%vectors
%\newcommand{\cyvec}[1]{\bar{\mathrm{#1}}}
%\newcommand{\vx}{\cyvec{x}}
%\newcommand{\vy}{\cyvec{y}}
%\newcommand{\vz}{\cyvec{z}}
%\newcommand{\va}{\cyvec{a}}
%\newcommand{\ve}{\cyvec{e}}

%groupelements

%operators

%\DeclareMathOperator{\mon}{Mon} %example
\DeclareMathOperator{\aut}{Aut} %example
\DeclareMathOperator{\cay}{Cay}
%\DeclareMathOperator{\Pos}{Pos}
%\DeclareMathOperator{\GPR}{\mathcal{G}}
%\DeclareMathOperator{\fl}{\mathcal{F}}
%\DeclareMathOperator{\fw}{\mathcal{F}^{w}}
%\DeclareMathOperator{\con}{Conn}
%\DeclareMathOperator{\conp}{\con^{+}}
%\DeclareMathOperator{\conw}{\con^{w}}
% \DeclareMathOperator{\Hal}{H}
% \DeclareMathOperator{\Hyp}{\cH}
% \DeclareMathOperator{\semi}{\mathcal{S}}
%\newcommand{\mix}{\diamondsuit}
% \newcommandx{\cay}[1][1=\cP]{\Cay(#1)}
%\DeclareMathOperator{\fac}{Fac}

%\newcommand\xxoverset[3]{%
%  \resizebox{#1+\widthof{\scriptsize #2}}{\height}{$#3$}}
%\newcommand\extoverset[3][0pt]{%
%  \mathrel{\overset{\textup{#2}}{\xxoverset{#1}{#2}{#3}}}}
%\newcommand{\ijpath}[2]{\extoverset{[#1, #2]}{-}}

%\newcommand{\vect}[1]{\bar{\mathrm{#1}}} 

%2 to the polytope -like
%\newcommand{\twoSK}[1][\cK]{2s^{#1-1}}
%\newcommand{\dtwoSK}[1][\cK]{\hat{2}s^{#1-1}}
%\newcommand{\twoK}[1][\cK]{2^{#1}}
%\newcommand{\dtwoK}[1][\cK]{\hat{2}^{#1}}
%\newcommandx{\dtwoSM}[2][1=\cM, 2=s] {\hat{2}#2^{#1 - 1}}

%For the paper
%\newcommandx{\gpr}[2][1=\cK, 2=s]{\GPR_{#2}(#1)}
%\newcommand{\baseFlag}{\Phi_{0}}
%\newcommandx{\Proot}[2][1=\cP, 2= \baseFlag, usedefault]{\left( #1,#2 \right)}

%\DeclareMathOperator{\Aut}{Aut} %example

%\newenvironment{name}{begindef}{endef}

% Rhys' additional commands

% Font shorthand

% Standard maths shorthand
\newcommand{\ZZ}{\mathbb{Z}}

%\newcommand{\lcm}[2]{\mathrm{lcm}(#1,#2)}
%\renewcommand{\gcd}[2]{\mathrm{gcd}(#1,#2)}

% Group operators
\newcommand{\tgroup}{\mathbb{1}}

\newcommand{\gen}[1]{\langle #1 \rangle}

\newcommand{\Orbits}[2]{\mathrm{Orbits}_{#1}({#2})}

% Graph shorthand

%\newcommand{\Nbrs}[3][]{{N}^{#1}_{#2}(#3)}

%\newcommand{\Cay}[2]{\mathrm{Cay}(#1,#2)}

% Projective space shortcut

% Polar spaces shortcut

% Projective extended groups shortcuts
\newcommand{\GL}[2]{\mathrm{GL}_{#1}(#2)}

\usepackage[%
sorting=nyt, %
backend=biber, %
style=numeric,%
defernumbers=true,%
% sorting=ydnt,%
url=false,%
eprint=false,%
giveninits=true,
maxbibnames=9,%
% doi=false,%
isbn=false,%
sortcites = true,%
citestyle=numeric-comp,%
]{biblatex}
\addbibresource{references.bib}

\begin{document}

\title{A census of Cayley graphs}

\author{Rhys J. Evans}
\address{\parbox{\linewidth}{Rhys J. Evans\\ School of Mathematics and Statistics, University of Sydney\\ Camperdown, Sydney, NSW 2006, Australia}}%

\email{rhysjevans00@gmail.com}

\author{Primo\v{z} Poto\v{c}nik}
\address{\parbox{\linewidth}{Primo\v{z} Poto\v{c}nik\\ 
Faculty of Mathematics and Physics, University of Ljubljana,\\
Jadranska ulica 21, 1000 Ljubljana, Slovenia;\\
Also affiliated with\\
Institute of Mathematics, Physics and Mechanics\\ Jadranska ulica 19, 1000 Ljubljana, Slovenia}}%

\email{primoz.potocnik@fmf.uni-lj.si}

\maketitle

\begin{abstract}
Given positive integers $k$ and $n$, we present methods to construct all groups of order at most $n$ that contain a Cayley set of size $k$, and to enumerate the Cayley sets of order $k$ in a given group, up to the action of the automorphism group. We use these methods to generate complete lists of pairwise nonisomorphic 3-valent Cayley graphs with at most 5000 vertices and 4-valent Cayley graphs with at most 1025 vertices. 
\end{abstract}

\section{Introduction}\label{s:intro}

In this article we derive exhaustive generation methods used for the enumeration of Cayley graphs of small order and valency. Moreover, this article serves to describe and document recently obtained datasets of {\em all 3-valent Cayley graphs of order up to $5{,}000$} and of {\em all 4-valent Cayley graphs of order up to 1025}.

Compiling datasets (or `censuses' as they are often called) of symmetric graphs (as well other mathematical objects) has a long history, going back at least to 1930's when Foster \cite{Fos32} started collecting examples of highly symmetrical $3$-regular graphs (or `geometric circuits of electrical networks', as he called them). His work evolved over the years and culminated in a book, published in 1988, entitled {\em The Foster Census} \cite{FosterCensus}, which contained all but a few (connected, simple) $3$-valent graphs of order up to $512$ whose automorphism groups act transitively on the corresponding sets of {\em arcs} (that is, ordered pairs of adjacent vertices). This work was superseded in 2002 by Conder and Dobcs\'{a}nyi~\cite{ConderFosterCensus}, who used  results of Tutte's seminal paper~\cite{T_1947} and their newly developed algorithm for finding smooth small quotients of finitely generated groups to produce a first complete computer generated list of all $3$-valent arc-transitive graphs of order up to $768$. This list was later extended several times and now goes up to order $10{,}000$ \cite{ConPot25}.

At the same time as Foster worked on compiling his census of cubic arc-transitive graphs, Coxeter, Frucht and Powers \cite{zero} considered the class of $3$-valent graphs whose automorphism groups act regularly (transitively and with trivial stabiliser) on the corresponding vertex-sets. In modern language, they were considering $3$-valent {\em graphical regular representations}, a subclass of $3$-valent Cayley graphs. In the 1980's and 1990's, different datasets of small Cayley graphs of small valency (most notably valency $3$) were compiled by Royle and McKay, using a mixture of theoretical analysis and exhaustive computer enumeration. Most of these datasets, which were published online on Royle's university webpage, seem to be unavailable due to the change of the university policy \cite{RoylePersonal}. This is on one hand very unfortunate (though mainly for historical reasons, as most of these datasets are now included in more comprehensive censuses) and on the other hand emphasises the need for a more systematic publication and custodianship of mathematical data. 
By documenting our work in a form of an article, together with archiving the raw data on trusted repositories \cite{zenodo_2024}, we tried to make a step in this direction.

This paper is organised as follows. In Section \ref{s:prelims} we present preliminary results and new definitions. In particular, we define an \emph{$(a,b)$-valent Cayley set} of a group $G$ as an inverse-closed generating set $S$ of $G$ that does not contain the identity, and such that $S$ contains $a$ involutions and $b$ noninvolutions. This is followed by an outline of our enumerative approaches in Subsection \ref{ss:outline}, where we describe how to use the generation of $(a,b)$-valent Cayley sets for a finite set of pairs $(a,b)$ to enumerate the $k$-valent Cayley graphs of small order. In Section \ref{s:subvalent_groups} we present methods for the generation of all groups up to a given order which have $(a,b)$-valent Cayley sets. In Section \ref{s:cayley_sets} we give methods for the generation of all $(a,b)$-valent Cayley sets of a given group $G$, up to the action of $\aut(G)$. In Section \ref{s:extract_cayley} we describe the final step of the enumeration of small $k$-valent Cayley graphs, where we construct the Cayley graph defined by each Cayley set we have generated from the methods of Section \ref{s:cayley_sets} and reduce the resulting list up to graph isomorphism. 

In Section \ref{s:stats}, we present numerical data found in our enumerations of 3-valent and 4-valent Cayley graphs. Observing the trends in our data, we give several interesting open problems related to the automorphism group (Subsection \ref{ss:aut_data}), girth (Subsection \ref{ss:girth_data}) and bipartiteness (Subsection \ref{ss:bipartite_data}) of Cayley graphs of a given fixed valency.

\section{Preliminaries}\label{s:prelims}

In this paper, all graphs will be simple (i.e, undirected with no loops or multiple edges). The automorphism group of a graph $\Gamma$ is denoted by $\aut(\Gamma)$. Similarly, the automorphism group of a group $G$ is denoted by $\aut(G)$. 

Let $G$ be a group. A set $S\subset G$ such that $1\notin S$ is a \emph{Cayley set} of $G$ if $S$ is inverse-closed and generates $G$. Given a Cayley set $S$ of $G$, the \emph{Cayley graph} $\cay(G,S)$ is the graph with vertex-set $G$, such that distinct vertices $x,y$ are adjacent if and only if $xy^{-1}\in S$. In this case, $S$ is also called the \emph{Cayley set} of $\cay(G,S)$. It is important to be aware that in our definition, all Cayley graphs are connected. For our purposes, we introduce new notation relating Cayley graphs with their defining groups and Cayley sets.

For pairs of non-negative integers $(a,b),(a',b')$, we say that $(a',b')\preccurlyeq(a,b)$ if and only if $b'\leq b$ and $a'\leq a+(b-b')/2$. This gives a partial order on the set of pairs of non-negative integers. We will be interested in the induced ordering of $\preccurlyeq$ on the set of Cayley sets of a group, where a Cayley set $S$ is associated to the pair $(a,b)$ where $S$ contains $a$ involutions and $b$ noninvolutions.

\begin{definition}[for Cayley sets]\label{def:subvalent_sets}
    Let $G$ be a group with Cayley set $S$, and $a,b,k$ be nonnegative integers. Then;
\begin{enumerate}
    \item $S$ is \emph{sub-$k$-valent} if $|S|\leq k$. $S$ is \emph{$k$-valent} (or has \emph{valency $k$}) if $|S|=k$.
    \item $S$ is  \emph{sub-$(a,b)$-valent} if it consists of $a'$ involutions and $b'$ noninvolutions, such that $(a',b')\preccurlyeq(a,b)$. $S$ is \emph{$(a,b)$-valent} (or has \emph{valency $(a,b)$}) if it consists of $a$ involutions and $b$ noninvolutions.
    \end{enumerate}  
\end{definition}

Note that the graph $\cay(G,S)$ is regular with valency $k$ if and only if $S$ is a $k$-valent Cayley set. Furthermore, if $S$ is an $(a,b)$-valent Cayley set, then $b$ is even and $S$ is $(a+b)$-valent. If $S$ is sub-$(a,b)$-valent, it must be $(a',b')$-valent for some $a',b'$ such that $a'+b'\leq a+b$, so $S$ is sub-$(a+b)$-valent. We will denote the set of pairs $(a,b)$ such that $a,b$ are nonnegative integers, $b$ is even and $a+b=k$ as $\Sigma_k$.

\begin{definition}[for groups]
A group $G$ is \emph{sub-$k$-valent} if there exists a $k$-valent Cayley set of $G$. We define $k$-valent, $(a,b)$-valent and sub-$(a,b)$-valent groups analogously to the definitions for Cayley sets.    
\end{definition}

Therefore, there is a (connected) $k$-valent Cayley graph defined on $G$ if and only if $G$ is $k$-valent. Throughout the remainder of the paper, we will often replace `3-valent' and `4-valent' with `cubic' and `quartic', respectively. For example, the terms sub-3-valent and subcubic will be synonymous.

It is important to note that the class of sub-$(a,b)$-valent groups is closed under taking quotients. This property was the motivation behind the definitions of $\preccurlyeq$ and sub-$(a,b)$-valent Cayley sets.

\begin{lem}\label{lem:quotient_closed}
    Let $G$ be a group and $N\unlhd G$. Then;
    \begin{enumerate}
        \item If $S$ is a sub-$(a,b)$-valent Cayley set of $G$, $S$ also defines a sub-$(a,b)$-valent Cayley set of $G/N$.
        \item If $G$ is a sub-$(a,b)$-valent group, $G/N$ is also a sub-$(a,b)$-valent group.
    \end{enumerate}
\end{lem}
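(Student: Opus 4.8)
\emph{Proof sketch.} The plan is to reduce (ii) to (i), and to prove (i) by tracking the image of $S$ under the quotient homomorphism $\pi\colon G\to G/N$. For (ii): if $G$ is sub-$(a,b)$-valent then it has a sub-$(a,b)$-valent Cayley set $S$, and (i) then produces a sub-$(a,b)$-valent Cayley set of $G/N$, so $G/N$ is sub-$(a,b)$-valent; hence only (i) needs a genuine argument. For (i), I would set $\bar S := \pi(S)\sm\{1\}$ and first check that this is a Cayley set of $G/N$: since $\pi$ is a surjective homomorphism, $\pi(S)$ is inverse-closed and generates $G/N$, and deleting the identity (which is self-inverse and irrelevant to the generated subgroup) leaves $\bar S$ inverse-closed, generating, and identity-free.

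The substance is the valency bound. Write $S$ as the disjoint union of its $a'$ involutions together with $b'/2$ inverse-closed pairs $\{s,s^{-1}\}$ of noninvolutions, where $(a',b')\preccurlyeq(a,b)$ by hypothesis. Each such pair falls into exactly one of three cases: if $s\in N$, the pair maps into $\{1\}$ and contributes nothing to $\bar S$; if $s\notin N$ but $s^2\in N$, then $\pi(s)=\pi(s^{-1})$ is an involution of $G/N$ and the pair contributes at most one involution; if $s^2\notin N$, then $\{\pi(s),\pi(s^{-1})\}$ is an inverse-closed pair of noninvolutions and contributes at most one such pair. Likewise an involution of $S$ contributes nothing if it lies in $N$ and at most one involution otherwise. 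I would also record that $\pi(s)=\pi(t)$ forces $\pi(s^{-1})=\pi(t^{-1})$, so collisions among images only merge the contributions above and can never split a noninvolution pair. Letting $r$ be the number of pairs of the third type and $q$ the number of the second type, it follows that $\bar S$ is $(\bar a,\bar b)$-valent with $\bar b\leq 2r$ (at most two noninvolutions per surviving pair) and $\bar a\leq q+a'$ (at most $q$ involutions from collapsed pairs, plus at most $a'$ from the original involutions). Since $q+r\leq b'/2$, this yields $\bar b\leq b'$ and $\bar a\leq a'+q\leq a'+(b'/2-r)\leq a'+(b'-\bar b)/2$, i.e.\ $(\bar a,\bar b)\preccurlyeq(a',b')$. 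As $\preccurlyeq$ is transitive (being a partial order) and $(a',b')\preccurlyeq(a,b)$, we conclude $(\bar a,\bar b)\preccurlyeq(a,b)$, so $\bar S$ is sub-$(a,b)$-valent, which is (i).

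The only delicate point, and the place I expect to have to argue carefully, is the bookkeeping of how the valency can drop under $\pi$: either an element (or an entire noninvolution pair) is absorbed into $N$, which is a pure loss, or a noninvolution pair collapses onto a single involution, which costs exactly the trade of two fewer noninvolutions for one more involution that the definition of $\preccurlyeq$ was designed to permit. Once one checks that these are the only possibilities, and that image collisions cannot convert a noninvolution into an involution, the displayed inequalities follow immediately.
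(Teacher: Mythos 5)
Your proof is correct and follows essentially the same route as the paper: push $S$ through the quotient homomorphism, check the Cayley-set properties, and count how noninvolution inverse-pairs either die in $N$, collapse to a single involution, or survive, which is exactly the trade the relation $\preccurlyeq$ was designed for (the paper organises this count by fibres of the image rather than by pairs, but the inequality $\bar a\leq a'+(b'-\bar b)/2$ is the same). Your explicit use of transitivity of $\preccurlyeq$ to pass from an exactly $(a',b')$-valent $S$ to the sub-$(a,b)$-valent hypothesis is a small point the paper leaves implicit, and is a welcome bit of extra care rather than a deviation.
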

\begin{proof}
    (i) Let $\rho:G\to G/N$ be the natural homomorphism and $S_a,S_b$ be the set of involutions and noninvolutions in $S$, respectively. First note that $\rho(S)$ generates $G/N$ as $S$ generates $G$, and $\rho(S)$ is inverse-closed. 
    
    For all $s\in S$, the order of $\rho(s)$ divides the order of $s$. Therefore, $T_a:=\rho(S_a)\setminus 1$ is a set of involutions. Let $R_a,R_b$ be the set of involutions and noninvolutions of $\rho(S_b)\setminus 1$, respectively. For any element $s\in S_b$ such that $\rho(s)\in R_a$, $\rho(s^{-1})=\rho(s)$  and $s^{-1}\in S\setminus\{s\}$. Therefore, $b=|S_b|\geq 2|R_a|+|R_b|$. 

    Let $a':=|T_a\cup R_a|$ and $b':=|R_b|$. It follows that $\rho(S)\setminus 1$ is the disjoint union of the set of involutions $T_a\cup R_a$ and set of noninvolutions $R_b$, and defines an $(a',b')$-valent Cayley set of $G/N$.  By the above inequality and $|T_a|\leq |\rho(S_a)|\leq |S_a|=a$, we see that $$a'=|T_a\cup R_a|\leq |T_a|+|R_a|\leq a+(b-b')/2.$$ Therefore, $(a',b')\preccurlyeq (a,b)$.

    (ii) This follows from the definitions and part (i).
\end{proof}

Lemma \ref{lem:quotient_closed} also proves that sub-$k$-valent groups are closed under taking quotients. One computationally difficult subclass is the sub-$(k,0)$-valent groups. On the other hand, the subclass of groups of odd order are not $(a,b)$-valent for any nonzero value of $a$, resulting in faster computations relative to their order. 

The methods found in Section \ref{s:subvalent_groups} take advantage of the quotient-closed property, and can often be applied to quotient-closed subclasses of sub-$k$-valent groups. When a subclass is known to pose a particular problem (e.g., $2$-groups), we use additional theoretical results to improve the implementation of our methods (Subsections \ref{ss:min_normal} and \ref{ss:extension_methods}).

The right regular action of $G$ induces a regular group of automorphisms of $\cay(G,S)$, so $\cay(G,S)$ is a vertex-transitive graph. Conversely, by a theorem attributed to Sabidussi, any graph with a regular group of automorphisms is a Cayley graph of some group and Cayley set. The action of $\aut(G)$ on the elements of $G$ induces an action on the Cayley sets of $G$, which can be further restricted to the Cayley sets of a given valency $k$ or $(a,b)$. Given an automorphism $\phi\in \aut(G)$, $\phi$ naturally induces a graph isomorphism from $\cay(G,S)$ to $\cay(G,\phi(S))$. Therefore, each orbit of the action of $\aut(G)$ on the Cayley sets of $G$ defines a set of pairwise isomorphic graphs. Moreover, if $\phi \in \aut(G)$ fixes a Cayley set $S$ setwise, then $\phi$ is an automorphism of the Cayley graph $\cay(G,S)$. We define $\aut(G,S)$ as the set of all such $\phi$, and note that
$\aut(G,S) = \aut(\cay(G,S)) \cap \aut(G)$.

\subsection{Outline of the computational approach}\label{ss:outline}

Let $k,n$ be integers such that $k<n$, and $(a,b)\in \Sigma_k$. We present a series of methods used to attain all sub-$(a,b)$-valent groups of order at most $n$, together with $(a,b)$-valent Cayley sets. The relaxation to sub-$(a,b)$-valent groups is necessary when enumerating the $(a,b)$-valent groups of order $n$, as our methods require the class of groups to be closed under taking quotients.

As previously noted, all Cayley graphs of valency at most $k$ can be constructed from a sub-$(a,b)$-valent Cayley set $S$ of a sub-$(a,b)$-valent group $G$, where $(a,b)\in \Sigma_k$. Furthermore, any two representatives of an $\aut(G)$-orbit of Cayley sets of $G$ define isomorphic Cayley graphs. Therefore, the enumeration of Cayley graphs of valency at most $k$ and order at most $n$ (up to graph isomorphism) is divided into the following steps:

\begin{enumerate}[label=(\Roman*),itemsep=.3em,topsep=.7em]\label{ls:general_approach}
	\item\label{ls:general_approach_i} For each $(a,b)\in \Sigma_k$, construct a list $\cG(n,a,b)$ of pairwise nonisomorphic groups which contains all sub-$(a,b)$-valent groups of order at most $n$, but possibly also some groups that are not sub-$(a,b)$-valent (Section \ref{s:subvalent_groups}).
	\item\label{ls:general_approach_ii} For each $(a,b)\in \Sigma_k$ and group $G\in \cG(n,a,b)$, construct a list $\cS(G,a,b)$ of $(a,b)$-valent Cayley sets of $G$ that contains at least one representative of each $\aut(G)$-orbit (Section \ref{s:cayley_sets}).
	\item\label{ls:general_approach_iii} For each $(a,b)\in \Sigma_k$,  $G\in\cG(n,a,b)$ and $S\in\cS(G,a,b)$, construct $\cay(G,S)$ and reduce the resulting list of graphs up to isomorphism (Section \ref{s:extract_cayley}).
\end{enumerate}

Note that we allow  $\cG(n,a,b)$ to contain groups that are not $k$-valent. This is because our methods initially construct a list of extensions of sub-$(a,b)$-valent group, which may not be $(a,b)$-valent. At this point, the most efficient method we are aware of for recognising when a group $G$ is not $(a,b)$-valent are those applied in step \ref{ls:general_approach_ii}. The algorithms will be applicable to each group $G\in\cG(n,a,b)$, and return nothing if and only if $G$ is not $(a,b)$-valent (i.e, $\cS(G,a,b)=\emptyset$). Therefore, applying step \ref{ls:general_approach_ii} to each group in $\cG(n,a,b)$ will result in the identification of the $(a,b)$-valent groups of order at most $n$.  

Moreover, while we reduce the groups modulo isomorphism within each $(a,b)$-valent subclass for $(a,b)\in \Sigma_k$, we do not reduce them between different $(a,b)$-valent subclasses at this point. This is because for certain orders, such reductions are computationally difficult. However, we are guaranteed to construct at least one representative for every isomorphism class of sub-$k$-valent groups of order at most $n$. 

Similarly, we only require that $\cS(G,a,b)$ contains at least one representative of each $\aut(G)$-orbit on $(a,b)$-valent Cayley sets of $G$. By the above discussion, the set of Cayley graphs defined by members of $\cS(G,k)$ will cover all possible isomorphism classes of Cayley graphs of valency $(a,b)$ on $G$. Allowing multiple orbit representatives in $\cS(G,a,b)$ will only introduce isomorphic copies of a Cayley graph defined by Cayley sets in the same $\aut(G)$-orbit. Therefore, we can construct all Cayley graphs of valency at most $k$ and order at most $n$ from the lists $\cG(n,a,b)$ and the lists $\cS(G,a,b)$, for each $(a,b)\in \Sigma_k$ and $G\in\cG(n,a,b)$.

Finally, step \ref{ls:general_approach_iii} will take care of all redundancies in the resulting set of $k$-valent Cayley graphs $\{\cay(G,S):(a,b)\in \Sigma_k,G\in\cG(n,a,b),S\in\cS(G,a,b)\}$. Such redundancies are caused by at least one of the following: having two representatives of the same $\aut(G)$-orbit on $\cS(G,a,b)$, or having Cayley sets of two non-isomorphic groups or non-equivalent Cayley sets of the same group that yield isomorphic Cayley graphs.

\section{Constructing a list of sub-$(a,b)$-valent groups}\label{s:subvalent_groups}

Let $k$ be a positve integer and $(a,b)\in \Sigma_k$ througout this section. The methods we use to enumerate sub-$(a,b)$-valent groups in this paper are largely the same as those presented in \cite{PSV_2013a,PSV_2016}, differing only in their application and the inclusion of some basic theoretical observations.

We will introduce two methods of constructing a list $\cG(n,a,b)$ containing the sub-$(a,b)$-valent groups of order at most $n$: filtering through known libraries of groups (Subsection \ref{ss:lib_groups}) and group extensions (Subsection \ref{ss:extension_methods}). 
For several values of $(a,b)$ and $n$ we have used both of these methods, and find that both result in the same sub-$(a,b)$-valent groups (up to isomorphism). This gives empirical evidence of the correctness of the computations. It is also possible to combine both methods to capitalise on their respective advantages at different stages of the computation. 

Reducing the size of $\cG(n,a,b)$ will also reduce the number of computations carried out in steps \ref{ls:general_approach_ii} and \ref{ls:general_approach_iii}. This can result in an increase in efficiency and allow for computations with larger values of $k$ and $n$. However, reducing $\cG(n,a,b)$ fully can also be computationally difficult. Subsection \ref{ss:min_normal} gives a simple reduction method based on taking quotients.

~
\subsection{Quotients by minimal normal subgroups}\label{ss:min_normal}

We will often construct a list of potential sub-$(a,b)$-valent groups of order $n$, and would like to recognise that a group in the list is not sub-$(a,b)$-valent as efficiently as possible. A useful method to do so comes from considering normal quotients.

Suppose we are building a list $\cG(n,a,b)$ containing all sub-$(a,b)$-valent groups of orders at most $n$, and we have already constructed a partial sublist $\cG'$ containing all sub-$(a,b)$-valent groups of orders dividing $n$ nontrivially. If a group $G$ of order $n$ is sub-$(a,b)$-valent, then each quotient of $G$ by a nontrivial normal subgroup is isomorphic to a sub-$(a,b)$-valent group in $\cG'$. It follows that if $G$ is a group with a quotient that is not sub-$(a,b)$-valent, it does not need to be added to our partial sublist of sub-$(a,b)$-valent groups. 

This process can be used to filter out large numbers of groups that are not sub-$(a,b)$-valent, leaving us with the groups for which every quotient by a nontrivial normal subgroup is sub-$(a,b)$-valent. The work of Volta and Lucchini \cite[Theorem 1.4]{LV_1998} suggests that the groups which are not sub-$k$-valent but have only sub-$k$-valent quotients are somewhat special. 

Although isomorphism calculations are computationally costly in general, we use precomputed isomorphism invariants to improve efficiency. Such functionality and further improvements are already implemented in GAP and Magma, with substantial optimisation available for the groups of order at most 2000. One can also use canonicalisation for some classes of groups (e.g., $p$-groups \cite{BEO_2002,ANUPQ}) to avoid pairwise isomorphism calculations, although we have not found this necessary for our calculations so far.

To keep the number of isomorphism calculations as low as possible, we would like to reduce the number of quotients we need to consider. By Lemma \ref{lem:min_normal}, we only need to check quotients by minimal normal subgroups to check the condition for all nontrivial quotients. Considering only minimal normal subgroups has the added benefits of reduced computational time and memory usage.
\begin{lem}\label{lem:min_normal}
	Let $G$ be a group. If every quotient of $G$ by a minimal normal subgroup of $G$ is sub-$(a,b)$-valent, then every quotient of $G$ by a nontrivial normal subgroup is sub-$(a,b)$-valent.
\end{lem}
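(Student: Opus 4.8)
The plan is to argue by induction on $|N|$ that for every nontrivial normal subgroup $N \unlhd G$, the quotient $G/N$ is sub-$(a,b)$-valent, given the hypothesis that this holds whenever $N$ is minimal normal. First I would dispose of the base case: if $N$ is itself a minimal normal subgroup of $G$, there is nothing to prove. So suppose $N$ is nontrivial but not minimal normal, and let $M$ be a minimal normal subgroup of $G$ with $M \le N$ (such an $M$ exists because $N$ contains some minimal normal subgroup of $G$ — take a minimal element among the nontrivial normal subgroups of $G$ contained in $N$, which is automatically normal in $G$).

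The key step is then to pass through $M$. By hypothesis, $G/M$ is sub-$(a,b)$-valent. Now $N/M$ is a normal subgroup of $G/M$, and it is nontrivial since $M \subsetneq N$. The group $(G/M)/(N/M)$ is isomorphic to $G/N$ by the third isomorphism theorem. Since $|N/M| = |N|/|M| < |N|$, I can apply the inductive hypothesis: either $N/M$ is trivial (excluded here) or, arguing within the group $G/M$, every quotient of $G/M$ by a nontrivial normal subgroup is sub-$(a,b)$-valent — but wait, the inductive hypothesis as I set it up is about quotients of $G$, not of $G/M$. The cleaner route is to invoke Lemma \ref{lem:quotient_closed}(ii) directly: $G/M$ is sub-$(a,b)$-valent, hence so is every quotient of it, in particular $(G/M)/(N/M) \cong G/N$. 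This closes the induction without needing a secondary induction on $G/M$, so the argument is genuinely short.

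Concretely, the write-up goes: let $N$ be a nontrivial normal subgroup of $G$; choose a minimal normal subgroup $M$ of $G$ contained in $N$; by hypothesis $G/M$ is sub-$(a,b)$-valent; $N/M \unlhd G/M$ and $G/N \cong (G/M)/(N/M)$; apply Lemma \ref{lem:quotient_closed}(ii) to conclude $G/N$ is sub-$(a,b)$-valent. No induction is actually needed — the single application of quotient-closedness suffices once $M$ is found.

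The only point requiring the slightest care — the main (very minor) obstacle — is the existence of a minimal normal subgroup $M$ of $G$ lying inside a prescribed nontrivial normal subgroup $N$. This is immediate when $G$ is finite (the nontrivial normal subgroups of $G$ contained in $N$ form a nonempty poset under inclusion, so it has a minimal element, which is then minimal normal in $G$), and since all groups under consideration in this paper are finite, this causes no difficulty.
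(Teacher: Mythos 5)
Your proposal is correct and follows essentially the same route as the paper's proof: pick a minimal normal subgroup $M$ of $G$ inside $N$, use the hypothesis on $G/M$, and conclude via $G/N \cong (G/M)/(N/M)$ together with Lemma \ref{lem:quotient_closed}. The induction scaffolding you initially set up is indeed unnecessary, as you yourself observe, and your remark about needing finiteness to find $M$ is a fair (if implicit in the paper) point.
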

\begin{proof}
	Let $S$ be a sub-$(a,b)$-valent Cayley set of $G$ and $N\trianglelefteq G$ be nontrivial. Then there exists a minimal normal subgroup $M\trianglelefteq G$ such that $M\leq N$. As $G/M$ is sub-$(a,b)$-valent and $G/N\cong (G/M) / (N/M)$ by the isomorphism theorems, it follows from Lemma \ref{lem:quotient_closed} that $G/N$ is a sub-$(a,b)$-valent group.
\end{proof} 

Similarly to Lemma \ref{lem:quotient_closed}, we note that Lemma \ref{lem:min_normal} and all arguments in this subsection can be applied to the quotient-closed class of sub-$k$-valent groups.

\subsection{Using libraries of small groups}\label{ss:lib_groups}

The most accessible method of constructing the list $\cG(n,a,b)$ is to filter through a library of all groups of order at most $n$. In particular, we use the complete lists of groups of order at most 2000, except for order 1024, that are available in the GAP \cite{GAP,SmallGrp_2024} and Magma \cite{MAGMA_1997} computational algebra systems. These systems give easy access to every group of such orders, along with several precomputed attributes for each group. In fact, all groups of order 1024 that have rank at most 4 are also available through the GRPS1024 package for GAP \cite{B_2022}.

In this method, we iterate through the possible orders of a sub-$(a,b)$-valent group in increasing order. Suppose we are now considering the groups of order $n$. First, we filter out any group of rank greater than $k$. This is often very cost efficient, as rank is precomputed for most of the groups in the libraries. Then we check the quotients of each group as described in Subsection \ref{ss:min_normal}. After applying this to all possible orders, the remaining groups are added to the list $\cG(n,a,b)$.

This approach does not involve complex group construction methods. The precomputed attributes and carefully chosen presentations of each group in the library allow for immediate access to optimised isomorphism and normal subgroup functionalities. However, filtering the groups which are not sub-$(a,b)$-valent can still take a significant amount of computational time, as the proportion of groups of order $n$ that are sub-$(a,b)$-valent is very small. Therefore, this methods is found to be impractical for orders with a large number of groups (e.g, orders 1024 and 1536). This method is also limited by the range of orders of groups available in the libraries of groups. To extend our enumerations to the groups of several problematic orders and beyond order 2000, we need a method to construct sub-$(a,b)$-valent groups efficiently.

\subsection{Group extensions}\label{ss:extension_methods}

There are several known methods for enumerating finite groups of a given order \cite{BEO_2002,EHH_2017}. The efficiency of each method is affected by the properties of the groups one is enumerating.

Most of these methods involve one of two possible approaches:
\begin{itemize}
    \item Computing finite quotients of an infinite finitely presented group distinguished by a universality property for the class of groups that are being enumerated.
    \item Extending groups from a complete list of possible quotients, assuming some specific structure of the assumed normal subgroup.
\end{itemize} 
The first approach was successfully used, for example, in the enumeration of all (automorphism groups of) cubic arc-transitive graphs~\cite{ConPot25} of order at most $10{,}000$, or in the enumeration of group actions on compact Riemann surfaces of bounded genus~\cite{MarstonHomepage}.
This approach relies heavily on an algorithm of Holt and Firth~\cite{HoltFirth}, which computes all
normal subgroups of a finitely presented group up to a given index. Efficiency of the current implementation in Magma (which has an in-built limit of $500{,}000$ on the index of a normal subgroup) depends both on the number of the generators of the infinite group as well as on the complexity of the presentation, and is often hard to predict in advance. In the case of sub-$(a,b)$-groups, one could use this approach on the universal group $\langle x_1, \ldots, x_a, y_1, \ldots, y_b \mid x_1^2=\ldots=x_a^2=1\rangle$. However, it turns out that this approach is inferior to the one presented below, even in the case when $a+b = 3$.

The second approach builds the groups from below, starting with an appropriate set of basic groups (see Lemma~\ref{lem:inductive_radical}) and then extending these recursively by elementary-abelian groups. 
Several algorithms to enumerate extensions of specified structure are implemented in GAP \cite{ANUPQ,GrpConst}. However, in the case of low rank groups we find in significantly more efficient to use bespoke implementations in Magma that follow the approaches in \cite{PSV_2013a,PSV_2016}.
Let us explain this in more detail. 

Let $G$ be a group, $N\unlhd G$ and $Q\cong G/N$. Then $G$ is an \emph{extension} of $Q$ by $N$. Furthermore, the extension is \emph{direct} if $N$ is a minimal normal subgroup of $G$, and \emph{elementary abelian} if $N$ is an elementary abelian group. Generating extensions of a given group is often computationally difficult, but the process can be made easier if we restrict our search space to direct elementary abelian extensions.

Moreover, since we are only interested in the extensions of orders that do not exceed some prescribed bound, say $M$, we only need to consider extensions by elementary abelian groups $\ZZ_p^d$ such that $d\le \log_p(M/n)$, where $n$ is the order of the group that is being extended. In practice, the vast majority of extensions we need to perform are by cyclic groups $\ZZ_p$ and rank-2 groups $\ZZ_p^2$.

In the following we repeat arguments found in \cite{PSV_2016}, but in the application to $(a,b)$-valent groups. We remind the reader that the \emph{soluble radical} of a group $G$ is the largest soluble normal subgroup of $G$.

\begin{lem}\label{lem:inductive_radical}
    Let $G$ be an $(a,b)$-valent group. Then $G$ has trivial soluble radical, or is a direct elementary abelian extension of a sub-$(a,b)$-valent group. 
\end{lem}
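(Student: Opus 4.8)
The plan is to argue by contradiction: assume $G$ is an $(a,b)$-valent group with nontrivial soluble radical $R$, and show that $G$ must then be a direct elementary abelian extension of a sub-$(a,b)$-valent group. The natural thing to do is to descend inside $R$ to find a suitable normal subgroup of $G$ that is minimal and elementary abelian. First I would pick a minimal normal subgroup $N$ of $G$ contained in $R$; such an $N$ exists because $R$ is nontrivial and normal in $G$. Since $N$ is a minimal normal subgroup of $G$, it is characteristically simple, and being contained in the soluble radical it is soluble; a soluble characteristically simple finite group is elementary abelian. Thus $N \cong \ZZ_p^d$ for some prime $p$ and some $d \geq 1$, and $N$ is a minimal normal subgroup of $G$, so by definition $G$ is a direct elementary abelian extension of $Q := G/N$.

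It then remains to check that $Q = G/N$ is sub-$(a,b)$-valent. But this is exactly Lemma~\ref{lem:quotient_closed}(ii): $G$ is $(a,b)$-valent, hence in particular sub-$(a,b)$-valent (it is $(a',b')$-valent for $(a',b') = (a,b) \preccurlyeq (a,b)$), and the class of sub-$(a,b)$-valent groups is closed under quotients, so $G/N$ is sub-$(a,b)$-valent. This completes the dichotomy: either $R$ is trivial, or we have exhibited $G$ as a direct elementary abelian extension of the sub-$(a,b)$-valent group $G/N$.

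The only genuinely substantive point — and the one I would be most careful to state correctly — is the classification fact that a finite soluble characteristically simple group is elementary abelian, together with the observation that a minimal normal subgroup contained in the soluble radical is indeed soluble (the soluble radical being closed under taking subgroups is immediate since subgroups of soluble groups are soluble, but one should note the subgroup here is normal, so it lies in the largest \emph{normal} soluble subgroup). Everything else is bookkeeping: the existence of a minimal normal subgroup inside any nontrivial normal subgroup of a finite group, and the matching of definitions (``direct extension'' means the kernel is a minimal normal subgroup, ``elementary abelian extension'' means the kernel is elementary abelian). No nontrivial calculation is required.
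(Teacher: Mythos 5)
Your proof is correct and follows essentially the same route as the paper: take a minimal normal subgroup $N$ of $G$ inside the nontrivial soluble radical, observe it is elementary abelian, and conclude via Lemma~\ref{lem:quotient_closed} that $G/N$ is sub-$(a,b)$-valent. Your justification that $N$ is elementary abelian (minimal normal in $G$, hence characteristically simple, and soluble) is if anything stated more carefully than the paper's one-line appeal to $N$ being minimal normal in the radical.
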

\begin{proof}
    Suppose $G$ has non-trivial soluble radical $S$, and let $N\unlhd G$ be a minimal normal subgroup of $G$ contained in $S$. Then $N$ is a minimal normal subgroup in $S$, and hence must be elementary abelian. The result follows from Lemma \ref{lem:quotient_closed}.
\end{proof}

\paragraph{\textbf{The inductive step.}} Lemma \ref{lem:inductive_radical} allows one to inductively construct $(a,b)$-valent groups. For this, we use the $(a, b)$-valent groups of order at most $n$ with trivial soluble radical as the base case. For each group $G$ with trivial soluble radical, $\mathrm{soc}(G)$ is isomorphic to a direct product of non-abelian simple groups. Moreover, $G$ acts faithfully on $\mathrm{soc}(G)$ by conjugation and thus $G$ embeds into $\aut(\mathrm{soc}(G))$. Such groups are sparse amongst the groups up to a given order. Therefore, the database of small simple groups in Magma can be used to construct all groups of order at most $n$ with trivial soluble radical, for small $n$. In particular, one finds there are 23 groups with trivial soluble radical of order at most 6000.

In the inductive step, we compute direct elementary abelian extensions of a given group. Let $Q$ be a group and $N$ be an elementary abelian $p$-group for prime $p$. To compute all direct elementary abelian extensions of $Q$ by $N$, it is enough to compute all irreducible $\ZZ_pQ$-module structures $M$ of $N$, and then compute the cohomology group $H^2(Q,M)$ for each $M$. It is known that each element of $H^2(Q, M)$ then gives rise to a direct extension of $Q$ by $N$. Furthermore, each direct elementary abelian extension of $Q$ by $N$ can be obtained in this manner. 

\paragraph{\textbf{Computing cohomology groups.}}
Algorithms for the computation of all irreducible modules and cohomology groups are implemented in Magma, alongside functions for the direct computation of certain classes of group extensions. 
However, the implementations of the algorithms for computing  irreducible $\ZZ_pG$-modules available in Magma were not sufficiently efficient for our purposes. This is mainly due to the fact that the Magma implementations always attempt to compute {\bf all} irreducible $\ZZ_pG$-modules, with no optimisation when asking for only modules up to a given $\ZZ_p$-dimension. However, as we already mentioned in the discussion preceding Lemma~\ref{lem:inductive_radical}, in the majority of cases we only need to compute extensions by elementary abelian groups of bounded (typically very small) rank. This allowed us to devise and implement an alternative approach, where we first precompute all irreducible subgroups of relevant general linear groups $\GL{d}{\ZZ_p}$ (for $d$ bounded), and then for each group $G$ that needs to be extended, find all epimorphisms from $G$ to the precomputed irreducible subgroups of $\GL{d}{\ZZ_p}$, for all primes $p$ and positive integers $d$ such that the order $p^d|G|$ of the extended group does not exceed the limit on the order of groups we want to find (say, $5{,}000$ in the case of $3$-valent groups). When $d$ is small, this simple shortcut significantly speeds up the process of finding all irreducible $d$-dimensional $\ZZ_pG$-modules.

\paragraph{\textbf{Further optimisations.}}
During the process of generating a list of groups as group extensions, we occasionally use careful ordering considerations to optimise computations. Suppose we are generating a list $\cG$ of sub-$(a,b)$-valent groups of order $n$ by computing the direct extensions of a complete list of possible quotients $\cQ$ by a set of minimal normal subgroups $\cN$. Some groups will be the result of many nonequivalent extensions, so they will be generated several times. To filter out copies coming from different groups in $\cQ$, one can use an approach similar to Subsection \ref{ss:min_normal}. 

Assume we construct a group $G\in\cG$ as a direct extension of a group $Q\in\cQ$ by a minimal normal subgroup $N\in\cN$. Then we compute all minimal normal subgroups of $G$, and check that all corresponding quotients are sub-$(a,b)$-valent (Subsection \ref{ss:min_normal}). When doing so, we can also check if a minimal normal subgroup $N'\leq G$ is such that $N'$ is isomorphic to an element of $\cN$ and $Q'=G/N'$ is isomorphic to an element of $\cQ$. If so, and all extensions of $Q'$ by $N'$ have already been computed, we can discard $G$. In this way each $G$ is constructed only once, as the extension of the first pair $(Q,N)\in\cQ\times\cN$ we consider such that $G/N\cong Q$ and $N\unlhd G$ is a minimal normal subgroup. 

One bottleneck in this computation is when testing for isomorphisms of the minimal normal subgroups and the quotients with elements of $\cN$ and $\cQ$, respectively. However, we can often argue theoretically to find a set $\cN$ consisting of a small number of groups. For example, $\cN=\{\ZZ_p\}$ suffices for any $p$-group as all minimal normal subgroups of a $p$-group are isomorphic to $\ZZ_p$. This characterisation also allow one to omit all isomorphism tests between minimal normal subgroups and $\ZZ_p$. The implementation of isomorphism tests for quotients by minimal normal subgroups are handled in the same way as described in Subsection \ref{ss:min_normal}.

Extension methods afford the opportunity to generate all sub-$(a,b)$-valent groups of large orders. The proportion of groups of order $n$ that are sub-$(a,b)$-valent is small for large $n$. As sub-$(a,b)$-valent groups are closed under taking quotients, we only need to apply extension methods to a small proportion of groups in our computations. This is particularly useful when considering groups of order a large power of 2, as such groups account for the overwhelming majority of groups of relatively small order.

\section{Constructing a list of $(a,b)$-valent Cayley sets}\label{s:cayley_sets}

Let $k$ be a positve integer and $(a,b)\in \Sigma_k$ througout this section. Given a group $G$, we introduce two methods to construct a list $\cS(G,a,b)$ of $(a,b)$-valent Cayley sets of $G$ having at least one representative of each $\aut(G)$-orbit. 

Let $G^\pm=\{\{x,x^{-1}\}:x\in G\setminus \tgroup\}$. Note that this is a partition of $G\setminus\tgroup$ into sets of size 1 or 2, and $\aut(G)$ acts on $G^\pm$ naturally. Furthermore, given a Cayley set $S$ of $G$, we can identify $S$ with a unique subset of $G^\pm$. For the remainder of this section, we will use $G^\pm$ as the base set for constructing Cayley sets of $G$.

\subsection{Orderly generation}\label{ss:orderly_generation}

One method to construct the list $\cS(G,a,b)$ is to use a simple orderly generation algorithm \cite{R_1978,F_1978}, which has been used in many similar applications (e.g., \cite{HR_2020,HRT_2022}). The implementation of this method relies on the ability to compute the smallest lexicgraphic image of a set under the action of a permutation group. In GAP, the Grape package \cite{GRAPE} function \texttt{SmallestImageSet} and the images package \cite{images} provide such functionality. 
\begin{algorithm}[]
	\caption{Orderly generation of $(a,b)$-valent Cayley sets of the group $G$}\label{alg:orderly_generate}
	\begin{algorithmic}
		\State $G^\pm\gets\{\{x,x^{-1}\}:x\in G\setminus \tgroup\}$
            \State $I\gets \{y\in G^\pm:|y|=1\}$, $N\gets G^\pm\setminus I$
          \State $\ll\medskip\gets$ strict total ordering of $G^\pm$ such that $\forall y\in I,\forall x\in N, y\ll x $
		\Function{OrderlyGenerate}{T} \Comment{define the recursive function}
            \If{$|\bigcup T|=k$}\Comment{where $\bigcup T := \bigcup_{x\in T}x$}
	   	  \If{$\gen{\bigcup T}=G$}  
	   	    \State \Record~$T$ \Comment{or add to $\cS(G,a,b)$}
    		  \EndIf
                \State \Return
            \EndIf
            \State $t\gets|\bigcup T|$
            \LComment{We extend $T$ in an `orderly' manner, so we can filter out subsets that cannot be contained in an orderly-generated $(a,b)$-valent Cayley set.}
            \If{$t<a$} \Comment{We add involutions first}
               \State $E\gets\{y\in I:\max_{\ll}(T)\ll y,|\{x\in I:y\ll x\}|\geq a-t-1\}$
            \Else
               \State $E\gets\{y\in N:\max_{\ll}(T)\ll y,|\{x\in N:y\ll x\}|\geq (k-t-2)/2\}$
            \EndIf
	      \For{$y\in E$}
	        \If{\Call{IsSmallestImage}{$\aut(G),T\cup\{y\}}$} \Comment{lex$_\ll$ ordering}
		       \State \Call{OrderlyGenerate}{$T\cup\{y\}$}
		    \EndIf   
		  \EndFor
		\EndFunction
		\Call{OrderlyGenerate}{$\emptyset$} \Comment{start the full recursive process} 
	\end{algorithmic}
\end{algorithm}

Pseudocode for this method is presented in Algorithm \ref{alg:orderly_generate}. This method can also be adapted to generate Cayley sets with valencies in any subset of $\Sigma_k$. Furthermore, the generation of $(k,0)$-valent Cayley sets can be done by restricting the base set $G^\pm$ to involutions in $G$. 

This method records exactly one $(a,b)$-valent Cayley set from each $\aut(G)$-orbit, constructing a list $\cS(G,a,b)$ of minimum possible size. The algorithm traverses a search tree in a depth-first manner, resulting in reduced memory requirements and easy parallelisation. However, computing the smallest image of a subset under the action of a permutation group is computationally hard, and many such computations may be needed to traverse the full search tree.

\subsection{Ordered generation}\label{ss:ordered_generation}

We also use method to construct the list $\cS(G,a,b)$ that avoids the use of a smallest image function. In this method we construct an ordered list $L$ recursively, and reduce the possible extensions of $L$ by using equivalence under the (pointwise) stabiliser of $L$ in $\aut(G)$.

\begin{algorithm}[]
	\caption{Ordered generation of $(a,b)$-valent Cayley sets of the group $G$}\label{alg:ordered_generate}
	\begin{algorithmic}
		\State $G^\pm\gets\{\{x,x^{-1}\}:x\in G\setminus \tgroup\}$
            \State $I\gets \{y\in G^\pm:|y|=1\}$, $N\gets G^\pm\setminus I$
		\State $\ll\medskip\gets$ strict total ordering of $G^\pm$
		\Function{OrderedGenerate}{L} \Comment{define the recursive function}
            \If {$|\bigcup L|=k$} \Comment{where $\bigcup L := \bigcup_{x\in L}x$}
    		  \If{$\gen{\bigcup L}=G$}    
	            \State \Record~$L$ \Comment{or add to $\cS(G,a,b)$}
                \EndIf
                \State \Return
            \EndIf
            \If{$|\bigcup L|<a$}\Comment{add $a$ involutions first}
               \State $E\gets I\setminus L$
            \Else
               \State $E\gets N\setminus L$	 
            \EndIf   
	   	\State $A\gets \aut(G)_{(L)}$
	      \State $R\gets$\Call{SmallestRepresentative}{$\Orbits{A}{E}$}
		\For{$y\in R$}
    		\State \Call{OrderedGenerate}{$L+[y]$}\Comment{`$+$' here means concatenation}
		\EndFor
		\EndFunction
		\Call{OrderedGenerate}{$[~]$} \Comment{Start the full recursive process} 
	\end{algorithmic}
\end{algorithm}

Pseudocode for this method is presented in Algorithm \ref{alg:ordered_generate}. In particular, the resulting set $\cS(G,a,b)$ will not necessarily be reduced up to the action of $\aut(G)$ on Cayley sets. However, $\cS(G,a,b)$ will contain at least one representative for each $\aut(G)$-orbit of Cayley sets. For some orders $n$ for which the number of groups of order $n$ is large, this method has been used in preference to orderly generation, due to the observed computational times. 

Only a few simple modifications to Algorithm \ref{alg:ordered_generate} are needed to implement a canonical augmentation method \cite{M_1998}. Such an algorithm may combine the benefits of reduced use of canonicalisation functions and the complete reduction of the resulting set $\cS(G,a,b)$ up to the action of $\aut(G)$. We have not yet implemented such an algorithm, but we plan to test and analyse this method in the future.

\section{Constructing the Cayley graphs}\label{s:extract_cayley}

Suppose we have computed lists $\cG(n,a,b)$ and $\cS(G,a,b)$ for each $(a,b)\in \Sigma_k$ and $G\in\cG(n,a,b)$ as in steps \ref{ls:general_approach_i} and \ref{ls:general_approach_ii}. We then compute the Cayley graphs $\cay(G,S)$ for each $(a,b)\in \Sigma_k$, $G\in\cG(n,a,b)$ and $S\in \cS(G,a,b)$, and reduce the resulting list of graphs by isomorphism. This reduction can be done using nauty \cite{nauty_2024}. In particular, we use the sparse internal representation option of the \texttt{shortg} utility to optimise calculations for Cayley graphs of low valency. We store the resulting graphs in sparse6 format \cite{nauty_2024}. 

\section{Statistical observations}\label{s:stats}

The methods presented in Sections \ref{s:subvalent_groups}, \ref{s:cayley_sets} and \ref{s:extract_cayley} have been applied to the generation of complete irredundant lists of cubic Cayley graphs of order at most 5000 \cite{CUBIC_2025}, and quartic Cayley graphs of order at most 1025 \cite{QUARTIC_2025}. In total we find 1939864 cubic and 11361600 quartic Cayley graphs, the distribution of which is illustrated in Figure \ref{fig:count_c}.

\begin{figure}[h]
    \centering
       \begin{minipage}{0.5\textwidth}
        \centering
        \includegraphics[width=\textwidth]{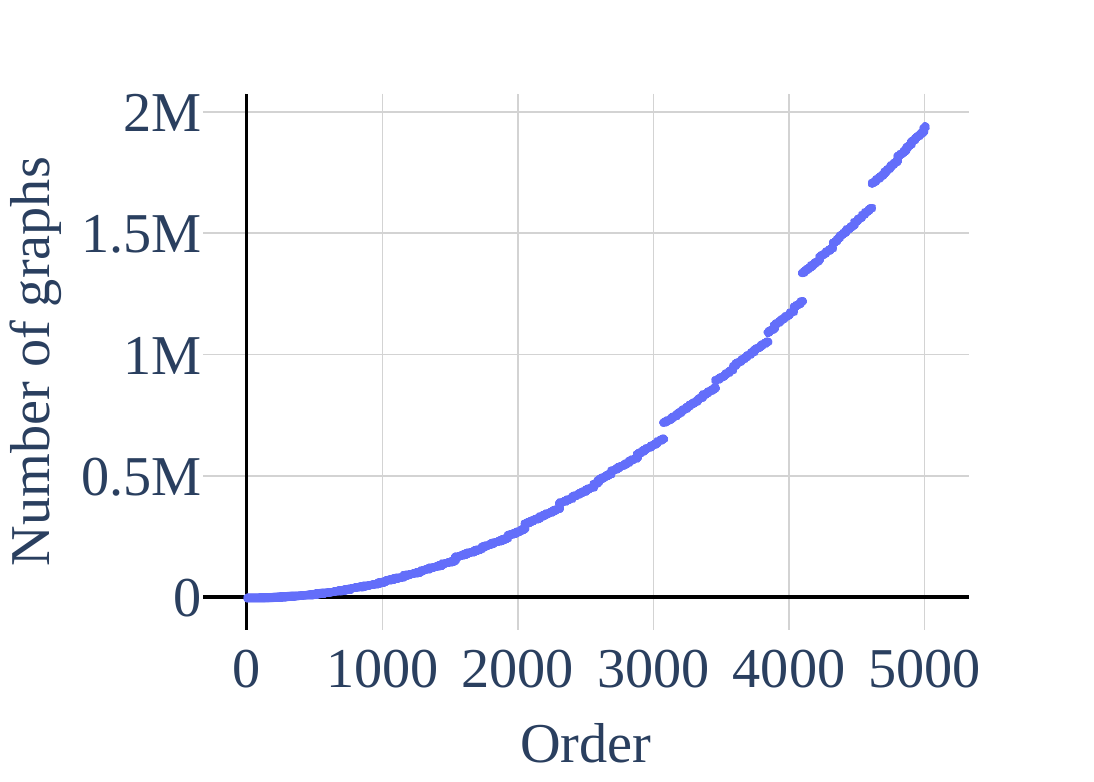} 
    \end{minipage}\hfill
    \begin{minipage}{0.5\textwidth}
        \centering
        \includegraphics[width=\textwidth]{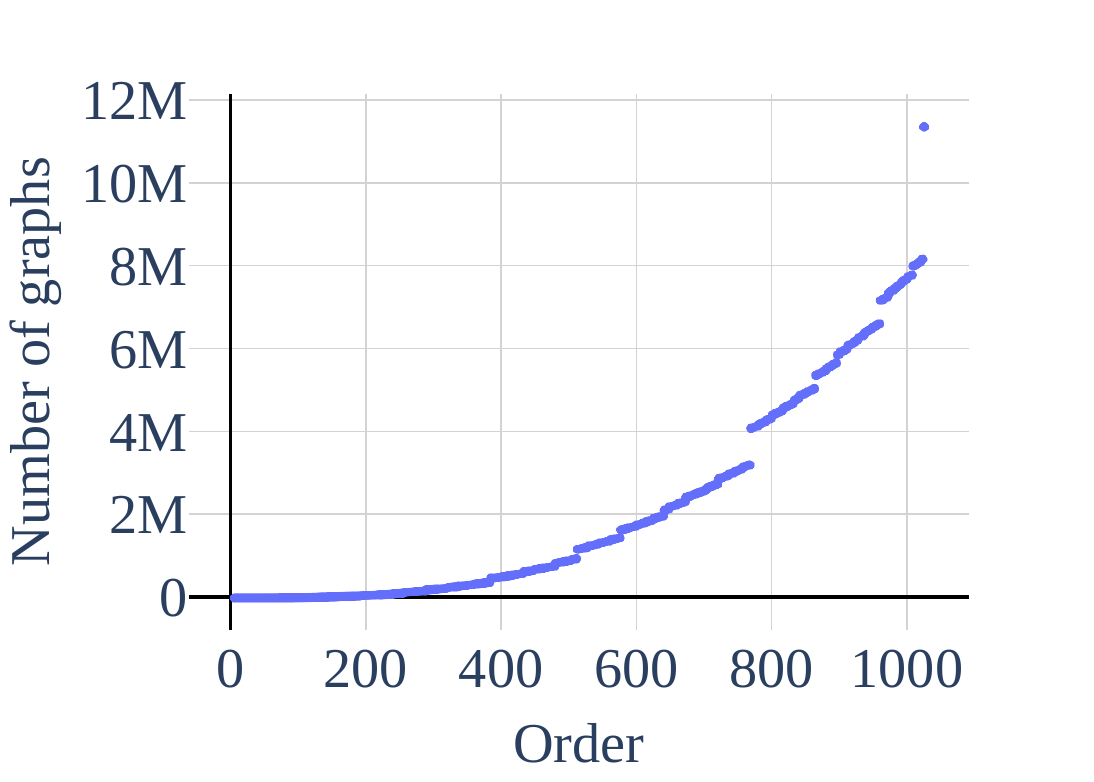} 
    \end{minipage}
    \caption{Cumulative number of cubic (L) and quartic (R) Cayley graphs of each order.}
    \label{fig:count_c}
\end{figure}

In this section we analyse these collections of graphs, and propose further investigations into the theory behind some observable patterns. For each figure caption, left and right scatter plots are referred to by (L) and (R), respectively.

\subsection{GRRs, arc-transitivity and vertex stabilisers}\label{ss:aut_data}

A Cayley graph $\Gamma$ of a group $G$ is said to be a \emph{graphical regular representation (GRR)} if $\aut(\Gamma)\cong G$. This is equivalent to the condition that the stabiliser of a vertex in $\aut(\Gamma)$ is trivial. 

The class of GRRs has seen much attention since their conception, with a particular interest in their frequency. Recently, Xia and Zheng \cite{XS_2023} proved that the proportion of Cayley graphs with at most $n$ vertices that are GRRs tends to 1 as $n\to\infty$ (note that they do not assume that Cayley graphs are connected). Figure \ref{fig:grr_c} suggests this is also true if we restrict to cubic or quartic Cayley graphs. In general, we can ask if this is true for $k$-valent Cayley graphs of any fixed valence $k$.
\begin{problem}
    Let $k\geq 3$ be an integer. Does the proportion of $k$-valent Cayley graphs with at most $n$ vertices that are GRRs tend to 1 as $n\to\infty$?
\end{problem}

\begin{figure}[h]
    \centering
       \begin{minipage}{0.5\textwidth}
        \centering
            \includegraphics[width=\textwidth]{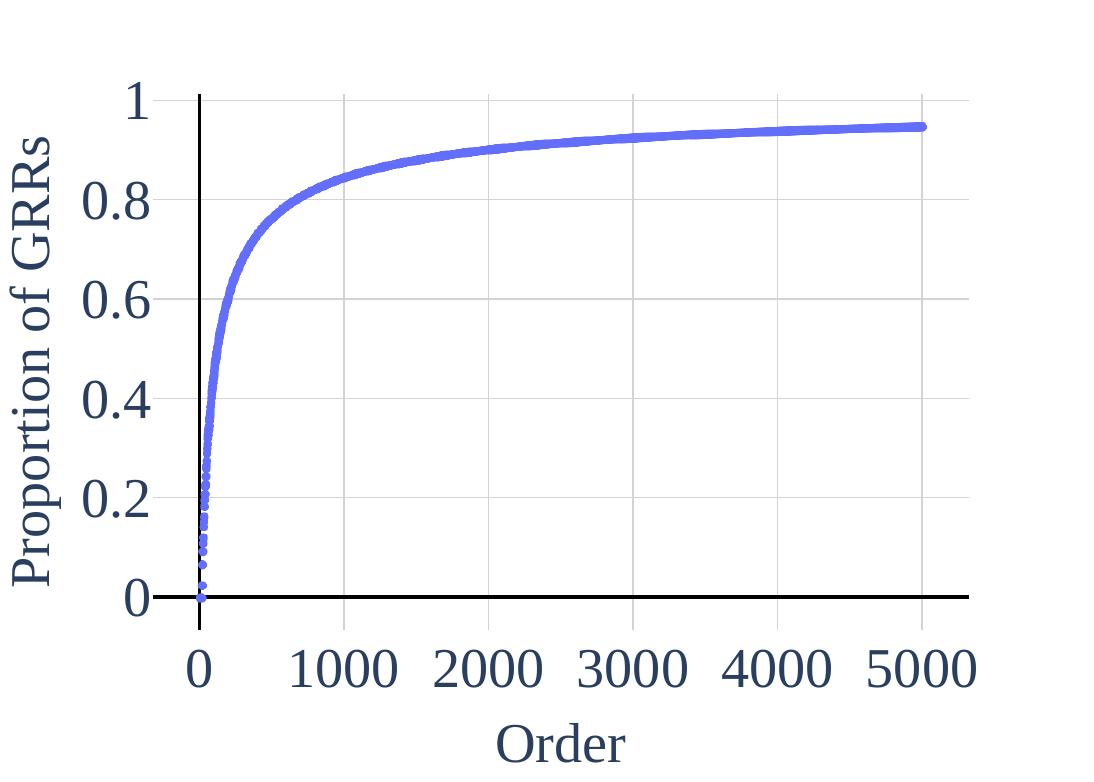}
    \end{minipage}\hfill
    \begin{minipage}{0.5\textwidth}
        \centering
         \includegraphics[width=\textwidth]{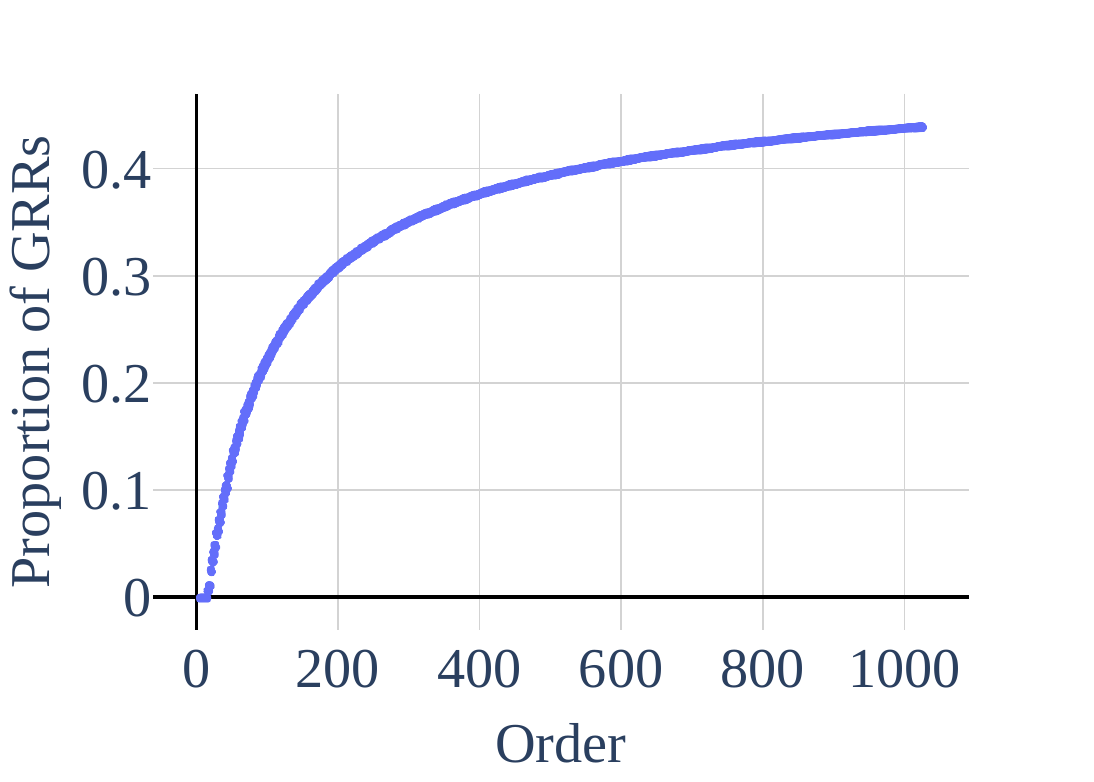} 
    \end{minipage}
    \caption{Cumulative proportion of cubic (L) and quartic (R) Cayley graphs that are GRRs.}
    \label{fig:grr_c}
\end{figure}

However, we observe that the rate at which the proportion of GRRs increases is significantly smaller for quartic Cayley graphs than for cubic Cayley graphs. This can be explained by the existence of quartic Cayley graphs with an odd number of vertices, and the difference between the odd and even order cases (see Figure \ref{fig:grr_quar_comp}).

\begin{figure}[h]
    \centering
         \includegraphics[width=\textwidth]{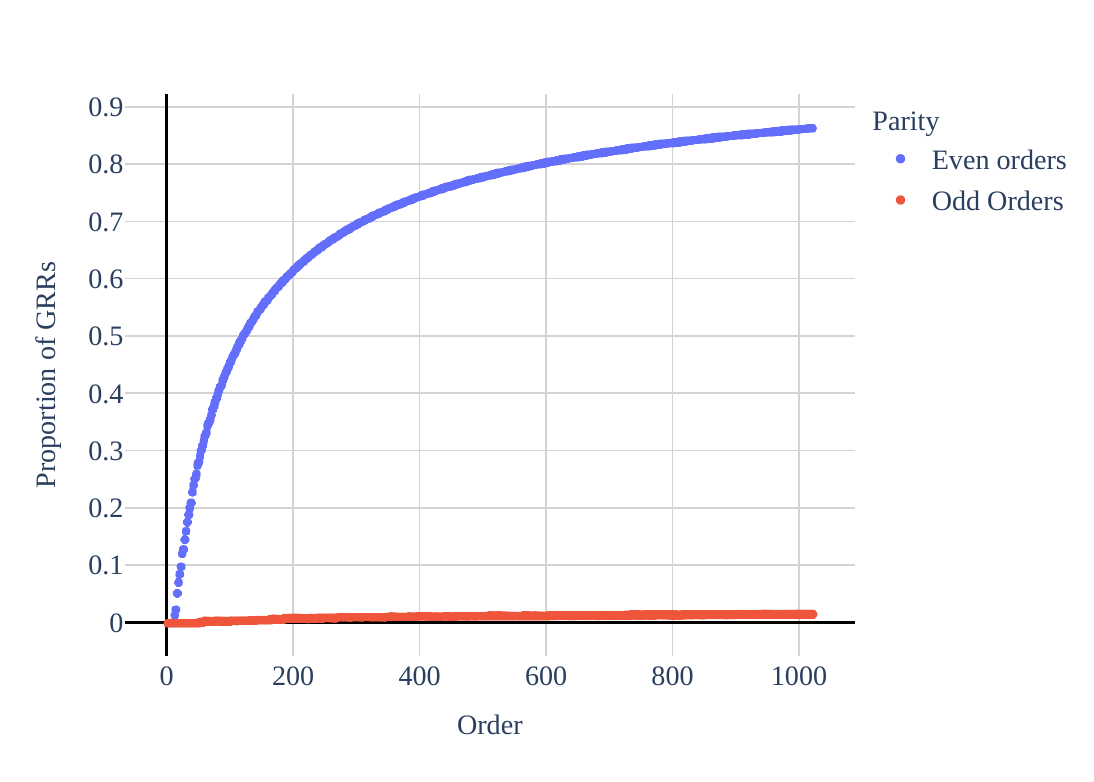} 
    \caption{Cumulative proportion of quartic Cayley graphs that are GRRs for odd and even orders.}
    \label{fig:grr_quar_comp}
\end{figure}

A graph $\Gamma$ is \emph{arc-transitive} if $\aut(\Gamma)$ acts transitively on the arcs of $\Gamma$. If $\Gamma$ is an arc-transitive Cayley graph, then it cannot be a GRR. In some sense, being a GRR and being arc-transitive are antipodal properties in the class of Cayley graphs.

Figure \ref{fig:at_c} illustrates the proportion of cubic and quartic Cayley graphs which are arc-transitive. As expected from the proportions of GRRs, the proportions of arc-transitive Cayley graphs tend to 0 as $n\to\infty$. We may therefore ask a similar question as the above.

\begin{problem}
    Let $k\geq 3$ be an integer. Does the proportion of $k$-valent Cayley graphs with at most $n$ vertices that are arc-transitive tend to 0 as $n\to\infty$?
\end{problem}

\begin{figure}[h]
    \centering
       \begin{minipage}{0.5\textwidth}
        \centering
          \includegraphics[width=\textwidth]{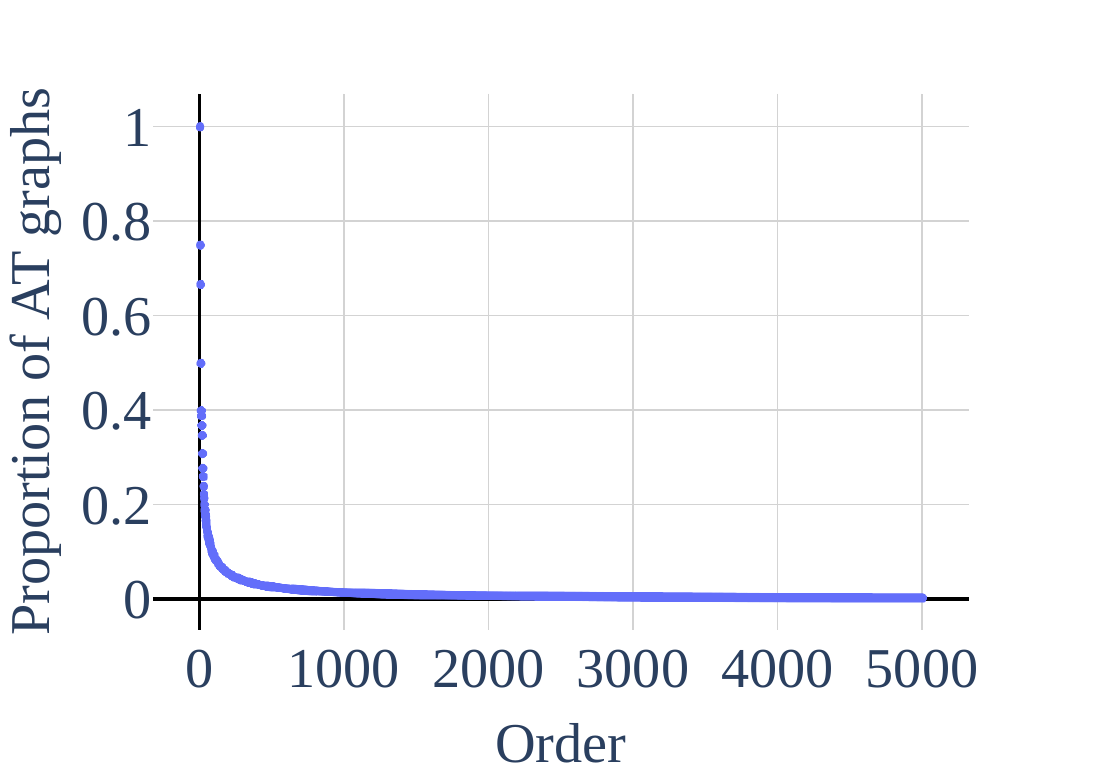} 
    \end{minipage}\hfill
   \begin{minipage}{0.5\textwidth}
        \centering
         \includegraphics[width=\textwidth]{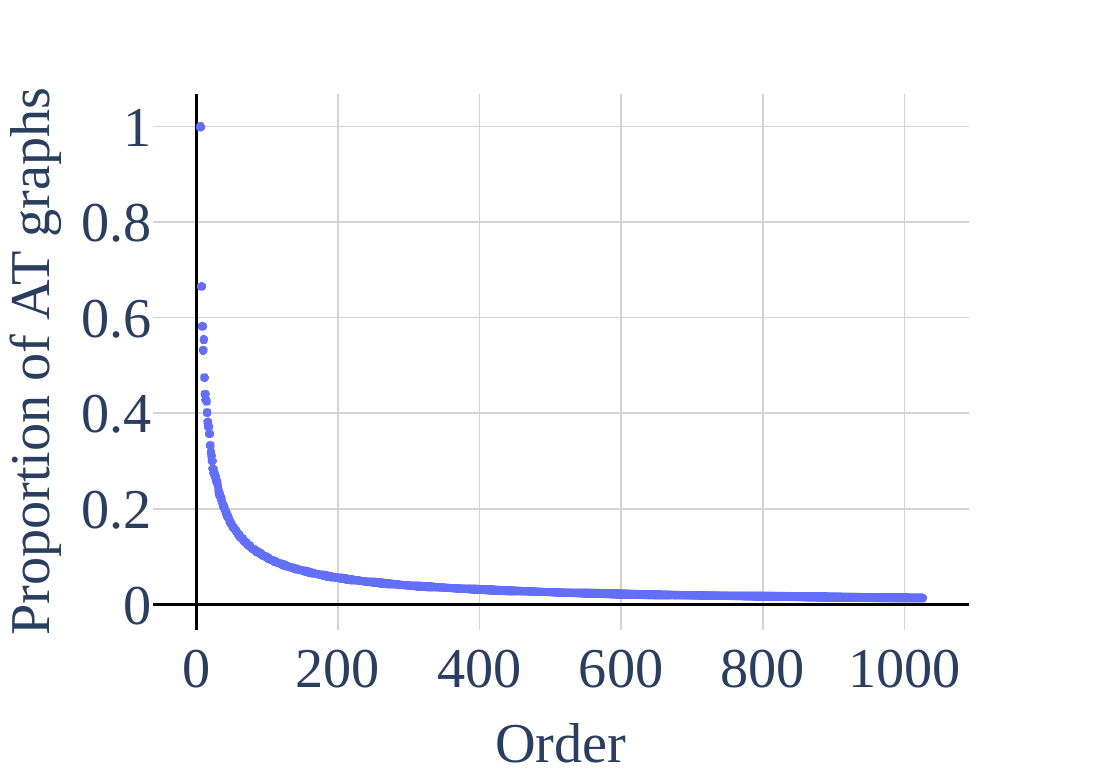} 
    \end{minipage}
    \caption{Cumulative proportion of cubic (L) and quartic (R) Cayley graphs that are arc-transitive.}
    \label{fig:at_c}
\end{figure}

Now let us consider only the cubic Cayley graphs that are neither a GRR or arc-transitive. Such a graph $\Gamma$ with vertex $v$ must be such that $|\aut(\Gamma)_v|=2^t$ for some $t\geq 1$. Let $s\geq 1$ be an integer and $\cA(n,s)$ be the set of cubic Cayley graphs with at most $n$ vertices and vertex-stabiliser of order $2^t$, for any $t\geq s$. Further, let 
$$a(n,s):=\frac{|\{\Gamma\in \cA(n,s):|\aut(\Gamma)_v|=2^s\}|}{|\cA(n,s)|}.$$
One can think of $a(n,s)$ as a measure of how often the stabiliser of graphs in $\cA(n,s)$ is as small as possible. Figure \ref{fig:stab} suggests that $a(n,s)$ grows steadily as $n\to\infty$. This prompts the following question. 

\begin{problem}
 Let $s\geq 1$ be an integer. Does the proportion $a(n,s)$ tend to 1 as $n\to\infty$?
\end{problem}

\begin{figure}[h]
    \centering
         \includegraphics[width=\textwidth]{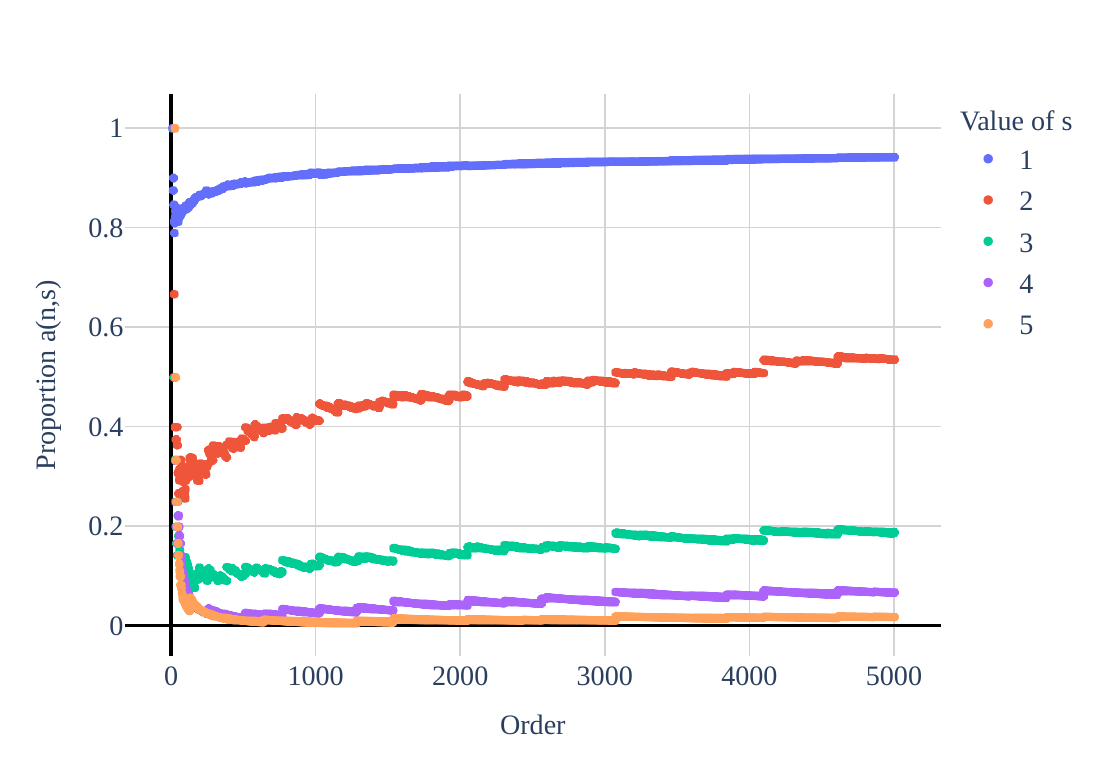} 
    \caption{The proportion $a(n,s)$ for $s\in\{1,2,3,4,5\}$.}
    \label{fig:stab}
\end{figure}

\subsection{Girth}\label{ss:girth_data}

The \emph{girth} of a graph $\Gamma$ is the length of the shortest cycle in $\Gamma$. Given integers  $k\geq 2,g\geq 3$, a \emph{$(k,g)$-cage} is a $k$-regular graph with girth $g$ having the smallest possible number of vertices. 

The study of $(k,g)$-cages was first presented by Tutte \cite{T_1947}, and many variations of these graphs can be found in the literature \cite{EJ_2011}. In particular, we are interested in \emph{Cayley $(k,g)$-cages}: the $k$-valent Cayley graphs with girth $g$ having the smallest possible number of vertices. We denote the size of a Cayley $(k,g)$-cage by $n_c(k,g)$. Table \ref{tab:cages} gives the values for $n_c(k,g)$ that can be extracted from our collections of cubic and quartic Cayley graphs.

\begin{table}[h]
    
    \begin{tabular}{|c||c|c|c|c|c|c|c|c|}
    \hline
      $g$ & 3 & 4 & 5 & 6 & 7 & 8 & 9 & 10 \\
      \hline
       $n_c(3,g)$ & 4 & 6 & 50 & 14 & 30 & 42 & 60 & 96\\
       \hline
    \end{tabular}       

\vspace{1em}
    
    \begin{tabular}{|c||c|c|c|c|c|c|c|c|}   
       \hline
       $g$ & 11 & 12 & 13 & 14 & 15 & 16 & 17 & 18 \\
       \hline
       $n_c(3,g)$ & 192 & 162 & 272 & 406 & 864 & 1008 & 3024 & 2640 \\
       \hline
    \end{tabular}
    
    \vspace{1em}
    
    \begin{tabular}{|c||c|c|c|c|c|c|c|c|}
    \hline
      $g$ & 3 & 4 & 5 & 6 & 7 & 8 & 9 & 10 \\
      \hline
       $n_c(4,g)$ & 5 & 8 & 24 & 26 & 72 & 96 & 320 & 410\\
       \hline
    \end{tabular}    
    
    \caption{Values of $n_c(3,g)$ and $n_c(4,g)$ for small $g$.}
    \label{tab:cages}
\end{table}

Given integers $k\geq 2,g\geq 3$, let $O(k,g)$ denote the possible orders of a $k$-valent Cayley graph with girth $g$. Note that $n_c(k,g)=\min O(k,g)$. We can also look at the natural density of $O(k,g)$, which is defined as $$D(k,g)=\lim_{n\to\infty} \frac{|O(k,g)\cap \{1,\dots,n\}|}{n}.$$ Figure \ref{fig:girth_density} illustrates the first values of the sequence within this limit, for $k\in\{3,4\}$ and $g\in\{3,\dots,10\}$.

\begin{figure}[h]
    \centering
       \begin{minipage}{0.5\textwidth}
        \centering
        \includegraphics[width=\textwidth]{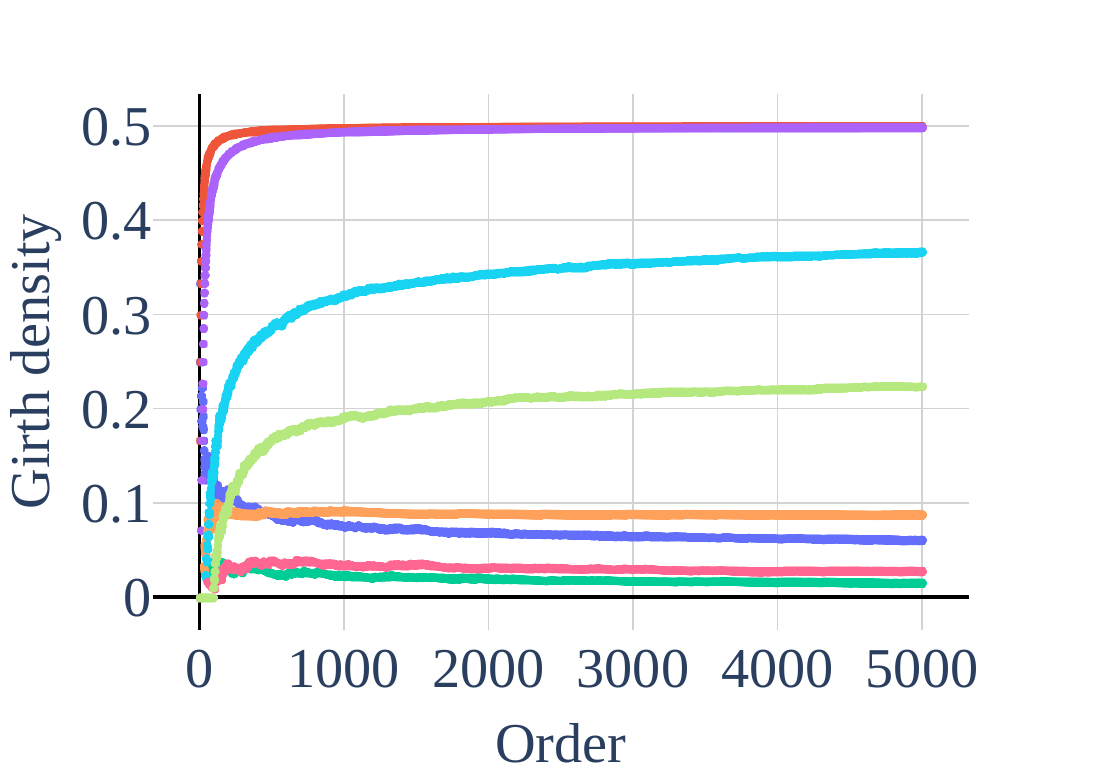} 
    \end{minipage}\hfill
    \begin{minipage}{0.5\textwidth}
        \centering
         \includegraphics[width=\textwidth]{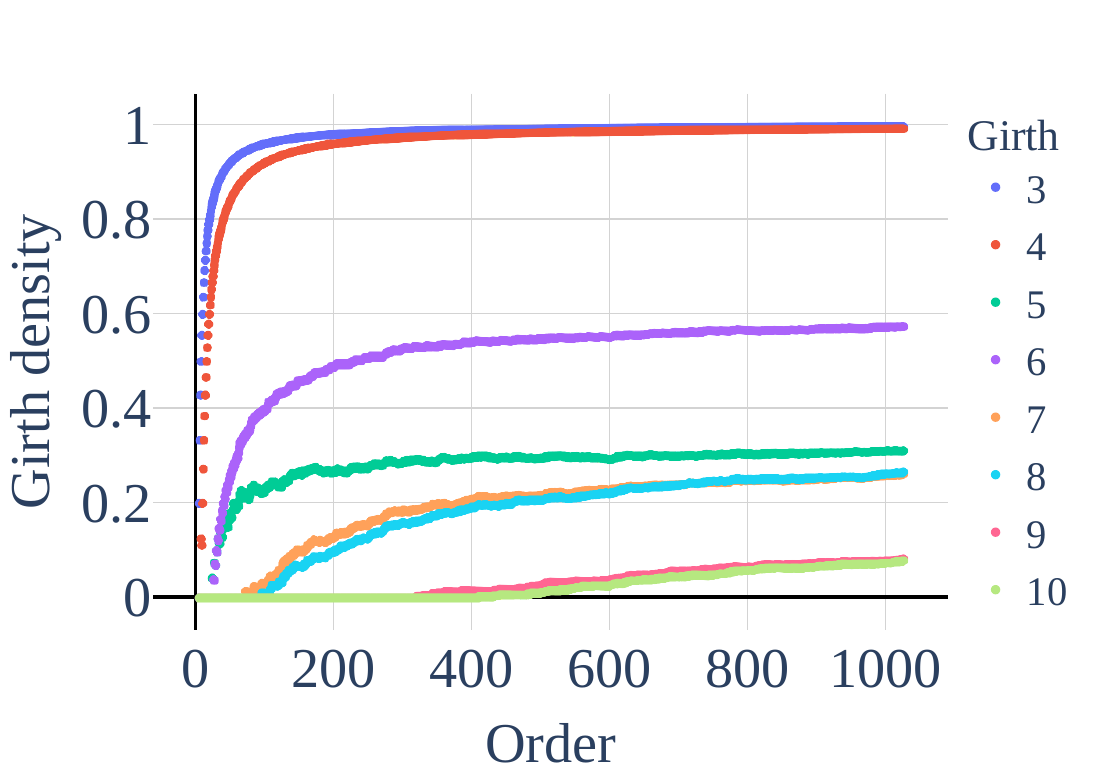} 
    \end{minipage}
    \caption{Girth density of cubic (L) and quartic (R) Cayley graphs.}
    \label{fig:girth_density}
\end{figure}

For small values of $g$, $D(3,g)$ has been determined. Specifically, $D(3,4)=D(3,6)=1/2,D(3,3)=D(3,5)=0$ and $D(3,7)=1/12$. Motivated by the patterns found in Figure \ref{fig:girth_density}, we pose the following question.

\begin{problem}
    For each even positive integer $g$, is $D(3,g)=1/2$?
\end{problem}

\subsection{Bipartiteness}\label{ss:bipartite_data}

A bipartite vertex-transitive graph necessarily has an even number of vertices. Taking this into account, Figure \ref{fig:bip_c} shows the proportion of bipartite cubic and quartic Cayley graphs with $2m\leq n$ vertices. Note that this proportion seems to tend to 1 as $n\to\infty$. Using the collection of small vertex-transitive graphs \cite{HR_2020,HRT_2022}, we also observe a trend of increasing proportions of bipartite graphs. We are prompted to ask the following. 
\begin{problem}
    Let $k\geq 3$ be an integer. Does the proportion of $k$-valent Cayley graphs (or more generally, vertex-transitive graphs) with at most $n$ vertices that are bipartite tend to 1 as $n\to \infty$?
\end{problem}

\begin{figure}[h]
    \centering
       \begin{minipage}{0.5\textwidth}
        \centering
        \includegraphics[width=\textwidth]{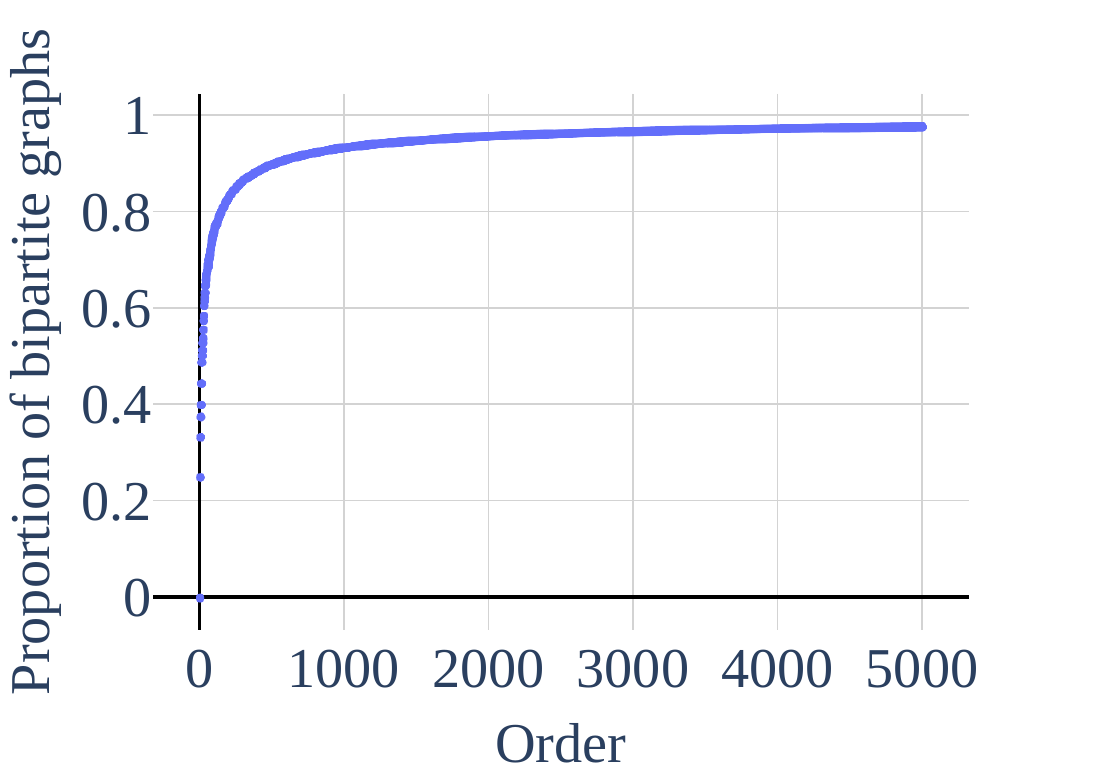} 
    \end{minipage}\hfill
    \begin{minipage}{0.5\textwidth}
        \centering
          \includegraphics[width=\textwidth]{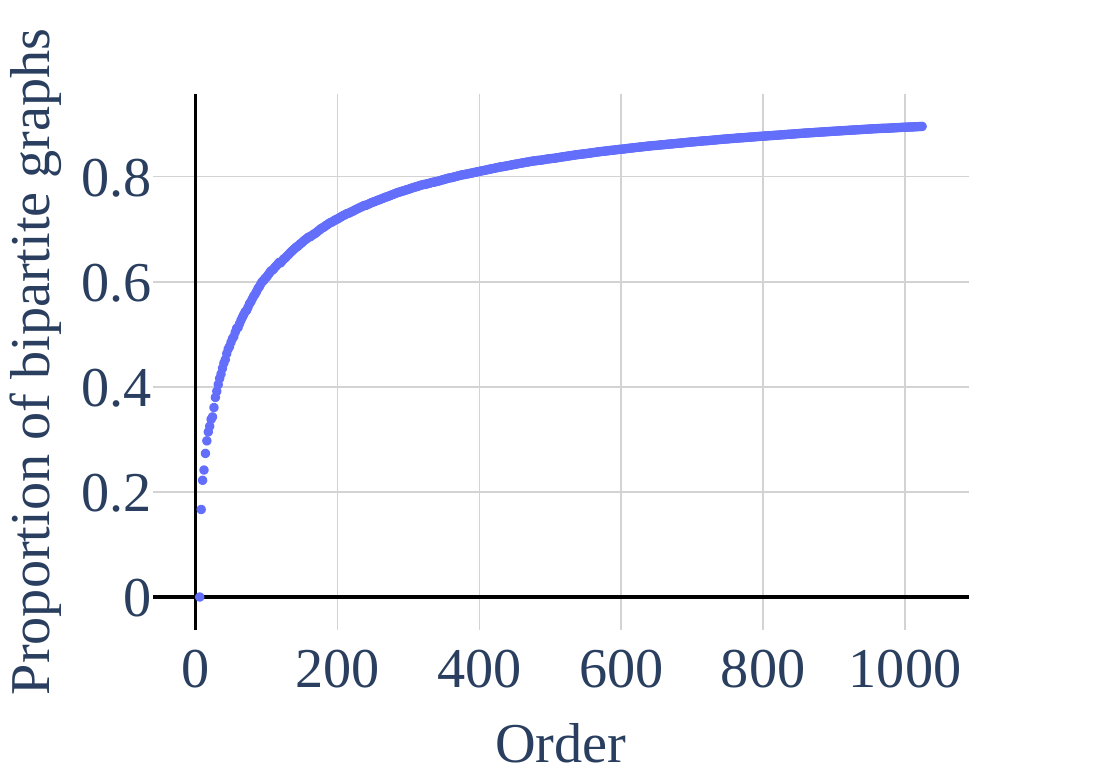} 
    \end{minipage}
     \caption{Cumulative proportion of (even order) cubic (L) and quartic (R) Cayley graphs that are bipartite.}
    \label{fig:bip_c}
\end{figure}

\section*{Acknowledgments}
Both authors acknowledge support of the Slovenian Research and Innovation Agency (ARIS), project numbers P1-0294 and J1-4351.

%%%%E
%

%The bibliography using biblattex
\setcounter{biburlnumpenalty}{100}
\printbibliography
%The bibliography using bibtex
% \bibliographystyle{plainurl}
% \bibliography{bib.bib}
\end{document}